\documentclass[intlim,righttag,10pt]{amsart}
\usepackage{amscd}
\usepackage{amssymb}
\usepackage[all]{xy}
\usepackage[french,british]{babel}
\oddsidemargin -0.1cm
\evensidemargin -0.1cm
\topmargin -0.3cm
\textwidth 16cm
\textheight 22cm

\newcommand{\Frob}{\mathrm{Frob}}
\newcommand{\kr}{\,\begin{picture}(-1,1)(-1,-2)\circle*{2}\end{picture}\ }

\DeclareMathOperator{\Spec}{\mathrm{Spec}}

\theoremstyle{plain}
\newtheorem{thm}{Theorem}
\newtheorem*{thm*}{Theorem}
\newtheorem{lem}[thm]{Lemma}

\newtheorem{prop}[thm]{Proposition}

\theoremstyle{remark}
\newtheorem{rem}[thm]{Remark}

\theoremstyle{definition}

\begin{document}

\title[Integral comparison]{Integral comparison of Monsky--Washnitzer and overconvergent de~Rham-Witt cohomology}

\author{Veronika Ertl and Johannes Sprang}
\address{Fakult\"at f\"ur Mathematik Universit\"at Regensburg  \\ 93053 Regensburg }
\email{}
\date{}
\date{\today}
\thanks{}
\maketitle 

\let\thefootnote\relax\footnote{The first author was supported by a habilitation grant through the Bavarian government. The second author was supported by DFG through CRC 1085. }

\begin{abstract}
{\noindent 
The goal of this small note is to extend a result by Christopher Davis and David Zureick-Brown  on the comparison between integral Monsky--Washnitzer cohomology and overconvergent de~Rham--Witt cohomology for a smooth variety over a perfect field of positive characteristic $p$ to all cohomological degrees independent of the dimension of the base or the prime number $p$. 
 }
\end{abstract}
\vspace{-9pt}
\selectlanguage{french}
\begin{abstract}
{\noindent
Le but de ce travail est de prolonger un r\'esultat de Christopher Davis et David Zureick-Brown concernant la comparaison entre la cohomologie de Monsky--Washnitzer enti\`ere et la cohomologie de de~Rham--Witt surconvergente d'une vari\'et\'e lisse sur un coprs parfait de charact\'eristique positive $p$  \`a tous les degr\'es cohomologiques ind\'epnedent de la dimension de base et du nombre premier $p$.\\
\medskip\\
\textit{Key Words:} Monsky--Washnitzer cohomology, de~Rham--Witt complex, overconvergent.\\
\textit{Mathematics Subject Classification 2000}:  14F30 (primary), 14F40, 13K05 (secondary) }
\end{abstract}

\selectlanguage{british}

\section*{Introduction}

Let  $k$ be a perfect field of positive characteristic $p$. As usual denote by $W(k)$ the ring of $p$-typical Witt vectors of $k$, and  let $K$ be the fraction field of $W(k)$. By a variety over $k$ we always mean a separated and integral scheme of finite type over the field $k$.

In  \cite{davis_langer_zink} Christopher Davis, Andreas Langer and Thomas Zink define for a finitely generated $k$-algebra $\bar{A}$ the overconvergent de~Rham--Witt complex $W^\dagger\Omega^{\kr}_{\bar{A}/k}\subset W\Omega^{\kr}_{\bar{A}/k}$ which can be globalised to a sheaf on a smooth variety $X$ over $k$. One of their main results is to compare the cohomology of this complex to Monsky--Washnitzer cohomology. 

 According to Elkik (cf. \cite[Sec.~2]{vanderput}) there is a smooth finitely generated $W(k)$-algebra $A$ which reduces to $\bar{A}$.   Let $\widehat{A}$ be the $p$-adic completion of $A$. The weak completion $A^\dagger$ of $A$ is the smallest $p$-adically saturated subring of $\widehat{A}$ containing $A$ and all series of the form $\sum_{i_1,\ldots,i_n\geqslant 0}c_{i_1\ldots i_n}x_1^{i_1}\cdots x_n^{i_n}$ with $c_{i_1\ldots i_n}\in W(k)$ and $x_j\in pA^\dagger$. It is a weak formalisation of $\bar{A}$ in the sense of  \cite[Def. 3.2]{formal_cohomology} and according to \cite[(2.4.4)~Thm.]{vanderput} any two weak formalisations are isomorphic. By construction it is  weakly  finitely generated. For short it is common to write w.c.f.g for weakly complete and weakly finitely generated algebras.

Let $\widetilde{\Omega}^{\kr}_{A^\dagger/W(k)}$ be the module of continuous differentials of $A^\dagger$ over $W(k)$. The Monsky--Washnitzer cohomology of $\Spec \bar{A}$ is then calculated by the rational complex $\widetilde{\Omega}^{\kr}_{A^\dagger/W(k),\mathbb{Q}}:=\widetilde{\Omega}^{\kr}_{A^\dagger/W(k)}\otimes\mathbb{Q}$
	$$
	H_{\text{MW}}^i(\bar{A}/K)=H^i(\widetilde{\Omega}^{\kr}_{A^\dagger/W(k),\mathbb{Q}}).
	$$
These notions are well-defined and functorial \cite[Sec.~5]{formal_cohomology}.

A comparison map between the two complexes in question can be obtained as follows. For a smooth lift of the Frobenius $f:A^\dagger\rightarrow A^\dagger$, which always exists, there is a monomorphism
	$$
	t_f:A^\dagger\rightarrow W(\bar{A})
	$$
which has in fact image in the overconvergent subring $W^\dagger(\bar{A})\subset W(\bar{A})$ and induces by the universal property of K\"a{}hler differentials and functoriality a comparison map
	$$
	t_f:\widetilde{\Omega}^{\kr}_{A^\dagger/W(k)}\rightarrow W^\dagger\Omega^{\kr}_{\bar{A}/k}.
	$$
The main result of Christopher Davis, Andreas Langer, and Thomas Zink  regarding this comparison morphism is the following \cite[Cor.~3.25]{davis_langer_zink}.

\begin{thm*}[Davis--Langer--Zink]
\renewcommand{\labelenumi}{(\alph{enumi})} \begin{enumerate}\item Let $\dim \bar{A}<p$, then the map $t_{F}:\Omega^{\kr}_{A^\dagger/W(k)}\rightarrow W^\dagger\Omega^{\kr}_{\bar{A}/k}$ is a quasi-isomorphism.
\item In general, there is a rational isomorphism
	$$
	H_{\text{MW}}^\ast(\bar{A}/K)\cong H^\ast(W^\dagger\Omega^{\kr}_{\bar{A},\mathbb{Q}}),
	$$
between Monsky--Washnitzer cohomology and rational overconvergent de Rham--Witt cohomology.\end{enumerate}
\end{thm*}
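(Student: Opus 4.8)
The plan is to prove (a) by reducing, via \'etale base change, to affine space and then carrying out an explicit computation with the de~Rham--Witt complex of a polynomial ring; part (b) is then deduced from (a) by a d\'evissage to the one-variable case that becomes available only after inverting $p$. For (a) one first notes that $\widetilde{\Omega}^{\kr}_{A^\dagger/W(k)}$, $W^\dagger\Omega^{\kr}_{\bar A/k}$ and the morphism $t_F$ are of local nature on $\Spec\bar A$ and compatible with (weakly) \'etale base change: for $\bar B\to\bar A$ \'etale one has $W^\dagger\Omega^{\kr}_{\bar A/k}\cong W^\dagger(\bar A)\otimes_{W^\dagger(\bar B)}W^\dagger\Omega^{\kr}_{\bar B/k}$, and similarly $\widetilde{\Omega}^{\kr}_{A^\dagger/W(k)}\cong A^\dagger\otimes_{B^\dagger}\widetilde{\Omega}^{\kr}_{B^\dagger/W(k)}$ for a compatible \'etale lift $B^\dagger\to A^\dagger$. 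Since $\bar A$ is smooth of dimension $d<p$, it is Zariski-locally finite \'etale over $\bar B=k[x_1,\dots,x_d]$; a Mayer--Vietoris argument together with the acyclicity of the overconvergent de~Rham--Witt sheaf on affines then reduces the assertion to $\bar A=k[x_1,\dots,x_d]$ with $d<p$ and the standard Frobenius lift $F(x_i)=x_i^p$.

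For affine space one uses the explicit description of $W\Omega^{\kr}_{k[x_1,\dots,x_d]/k}$ by basic Witt differentials \`a la Langer--Zink, together with the fact that the overconvergent subcomplex $W^\dagger\Omega^{\kr}\subset W\Omega^{\kr}$ is cut out by a growth condition on the coefficients. Decomposing both complexes according to the ``weight'' $\mathbf w\in\mathbb Z[1/p]^{d}$ (with non-negative coordinates), one shows: (i)~if some coordinate of $\mathbf w$ is non-integral, the weight-$\mathbf w$ summand of $W^\dagger\Omega^{\kr}_{k[\underline x]}$ is acyclic, by an explicit contracting homotopy assembled from partial antiderivatives in the individual variables; (ii)~if $\mathbf w\in\mathbb Z^{d}$, the weight-$\mathbf w$ summand coincides with the corresponding summand of $\widetilde{\Omega}^{\kr}_{W(k)[\underline x]^\dagger/W(k)}$, and $t_F$ realises precisely this identification. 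Summing over all weights shows that $t_F$ is a quasi-isomorphism for $k[\underline x]$. The hypothesis $d<p$ enters exactly here: the homotopies in (i) and the coefficients occurring in (ii) involve divisions by integers $\leqslant d$, which are $p$-adic units precisely when $d<p$; dropping the hypothesis leaves only the rational statement. Finally, a standard argument shows $t_f$ to be independent, up to chain homotopy, of the chosen Frobenius lift.

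To prove (b) one removes the dimension restriction after $\otimes\mathbb Q$. The \'etale base-change reduction above applies verbatim rationally, so it suffices to treat $\bar A=k[x_1,\dots,x_d]$ for arbitrary $d$. Here I would establish a K\"unneth isomorphism over $K$,
$$
W^\dagger\Omega^{\kr}_{k[x_1,\dots,x_d]/k,\mathbb Q}\;\simeq\;W^\dagger\Omega^{\kr}_{k[x_1]/k,\mathbb Q}\otimes_K\cdots\otimes_K W^\dagger\Omega^{\kr}_{k[x_d]/k,\mathbb Q},
$$
compatible with the corresponding decomposition of $\widetilde{\Omega}^{\kr}_{W(k)[\underline x]^\dagger/W(k),\mathbb Q}$ and with $t_F$; this can be read off the Langer--Zink presentation, or deduced from the integral K\"unneth formula for the de~Rham--Witt complex after inverting $p$. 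Each one-variable factor is covered by (a) since $1<p$, and a tensor product over the field $K$ of quasi-isomorphisms is again a quasi-isomorphism; hence $t_F\otimes\mathbb Q$ is a quasi-isomorphism for $\mathbb A^{d}$, and therefore, by the reduction, for every smooth affine $\bar A$. Alternatively, $\widetilde{\Omega}^{\kr}_{A^\dagger/W(k),\mathbb Q}$ computes $H^{\ast}_{\mathrm{rig}}(\Spec\bar A/K)$ by definition of Monsky--Washnitzer cohomology, so one only has to identify $H^{\ast}(W^\dagger\Omega^{\kr}_{\bar A,\mathbb Q})$ with it via $t_f\otimes\mathbb Q$.

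I expect the two real difficulties to be, first, the combinatorial bookkeeping in the affine-space computation for (a) --- in particular making the contracting homotopies for non-integral weights fully explicit and tracking precisely where the bound $d<p$ becomes necessary --- and, second, setting up the rational K\"unneth formula, together with the rational \'etale base change and finiteness statements on which it rests, that powers the d\'evissage in (b). Both are in the end assertions about the very concrete Langer--Zink description of the de~Rham--Witt complex of a polynomial ring, so the remaining work is combinatorial rather than conceptual.
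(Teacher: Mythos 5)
This statement is not proved in the paper at all: it is the theorem of Davis--Langer--Zink, quoted from \cite[Cor.~3.25]{davis_langer_zink}, and the whole point of the present note is to improve on it. So there is no internal proof to compare against; I can only measure your sketch against the actual DLZ argument and against how the paper \emph{uses} that argument. Your overall architecture (\'etale/Zariski reduction to polynomial-type algebras, Langer--Zink basic Witt differentials, weight decomposition, acyclicity of the non-integral weights, identification of the integral weights with the Monsky--Washnitzer complex) is indeed the right skeleton. But you have located the hypothesis $\dim\bar A<p$ in the wrong place, and this is not a cosmetic slip. The explicit computation for ``special'' affines (finite \'etale monogenic over a \emph{localized} polynomial ring --- not over $k[x_1,\dots,x_d]$ itself, which is a second inaccuracy in your reduction) yields an \emph{integral} quasi-isomorphism $\sigma$ in all degrees with no restriction on the dimension: this is \cite[Thm.~3.19]{davis_langer_zink}, and Section~3 of the present paper leans on exactly this unrestricted integral statement. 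If your claim that the contracting homotopies for non-integral weights force divisions by integers up to $d$ were correct, Theorem~3.19 would be false and the present paper's main theorem could not be proved the way it is.

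Where the restriction genuinely enters is the step you dismiss as ``a standard argument'': the globally defined map $t_F$ must be shown to agree, up to chain homotopy, with the local quasi-isomorphisms $\sigma$ on the members of the cover (equivalently, one must control the dependence of everything on the choice of Frobenius lift) before the \v{C}ech spectral sequence can be run. The homotopy operator used for this in the Monsky--Washnitzer tradition carries denominators governed by the degree of the differential forms involved, hence by $\dim\bar A$; this is precisely why DLZ get $\dim\bar A<p$, why Davis--Zureick-Brown get $i<p$, and why Proposition~\ref{Prop:homotopy} of the present paper (whose homotopy $L(\omega)=\sum\frac{p^{i+1}}{i+1}\omega''_i$ has integral coefficients) removes the restriction altogether. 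Your sketch of (b) inherits the same confusion: DLZ do not need a K\"unneth formula --- rationally the troublesome denominators are units, so the same gluing argument goes through --- and a K\"unneth decomposition in the variables would in any case not apply to the localized polynomial algebras to which the \'etale reduction actually leads. So the proposal is a reasonable first approximation to the shape of the proof, but the crux (the homotopy comparison of $t_F$ with $\sigma$ and its denominators) is missing, and the stated explanation of where $d<p$ is used is incorrect.
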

Christopher Davis and David Zureick-Brown generalise the first statement which is an integral result to a comparison independent of the dimension of $\bar{A}$ \cite[Thm.~1.1]{davis_zureickbrown}.
\begin{thm*}[Davis--Zureick-Brown]
Let $\Spec\bar{A}$ be a non-singular affine variety and $A^\dagger$ a weak formalisation. 
\renewcommand{\labelenumi}{(\alph{enumi})} \begin{enumerate}
\item The integral Monsky--Washnitzer cohomology groups are well-defined.
\item For all $i<p$, we have an isomorphism $H^i(\widetilde{\Omega}^{\kr}_{A^\dagger/W(k)})\xrightarrow{\sim} H^i(W^\dagger\Omega^{\kr}_{\bar{A}/k})$.
\end{enumerate}
\end{thm*}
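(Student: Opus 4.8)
The plan is to deduce part (b) by revisiting the Davis--Langer--Zink comparison while keeping careful track of cohomological degrees, and to deduce part (a) by a direct homotopy argument exploiting the overconvergence built into weak formalisations. The guiding principle for (b) is that the hypothesis $\dim\bar A<p$ in the Davis--Langer--Zink theorem is only felt through a bound on the degrees in which certain factorials have to be inverted, so that it can be traded for the degree condition $i<p$.

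For part (b), I would first reduce to finite truncation level. Since $W^\dagger\Omega^{\kr}_{\bar A/k}=\varprojlim_n W_n^\dagger\Omega^{\kr}_{\bar A/k}$, and since modulo $p^n$ the weak completion does not differ from $A$ (so that $\widetilde{\Omega}^{\kr}_{A^\dagger/W(k)}/p^n$ is the de~Rham complex of $A/p^n$ over $W(k)/p^n$), the statement is reduced to analysing $t_f$ on the pro-system $\{W_n^\dagger\Omega^{\kr}_{\bar A/k}\}_n$ together with the $p$-adic filtration on $\widetilde{\Omega}^{\kr}_{A^\dagger/W(k)}$. Each $W_n^\dagger\Omega^{\kr}$ carries the standard filtration by the images of the $V^s$ and $dV^s$, whose graded pieces are, up to Frobenius twist and shift, (brutal) truncations of the de~Rham complex of $\bar A/k$, and $t_f$ respects a corresponding filtration on the de~Rham side; by the associated spectral sequences and a five-lemma the comparison thus reduces to one of twisted, truncated de~Rham-type complexes. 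Working Zariski-locally one may assume $A$ is \'etale over a polynomial algebra $W(k)[x_1,\dots,x_d]$ equipped with the standard Frobenius lift $x_i\mapsto x_i^p$; in this local model the homotopy operators realising the comparison can be written down explicitly, and in cohomological degree $q$ they involve only denominators dividing $q!$. As $q!$ is a $p$-adic unit precisely when $q<p$, the local comparison is an integral quasi-isomorphism in degrees $<p$ regardless of $d$, and a Mayer--Vietoris argument over a finite affine cover glues this to the global affine statement, yielding the isomorphism on $H^i$ for $i<p$; for $d<p$ one recovers the Davis--Langer--Zink quasi-isomorphism.

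The step I expect to be the main obstacle is keeping everything \emph{integral} rather than merely rational: one must control the overconvergence radii uniformly over a finite affine cover and over the levels $n$, and one must cope with the fact that $A^\dagger$ is not itself an inverse limit, so that the passage from the pro-system $\{W_n^\dagger\Omega^{\kr}\}$ back to $W^\dagger\Omega^{\kr}$ requires the vanishing of the relevant $\varprojlim^1$-terms; this should follow once one checks that the pro-systems in question satisfy the Mittag--Leffler condition (e.g.\ that the restriction maps $W_{n+1}^\dagger\Omega^{\kr}\to W_n^\dagger\Omega^{\kr}$ are surjective). A related subtlety is that the identification of the graded pieces of the standard filtration on the de~Rham side is available integrally only once the Frobenius lift has been fixed and the compatibility of $t_f$ with $F$, $V$ and $d$ has been checked --- which is exactly the computation producing the factorial denominators above.

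For part (a), let $A^\dagger$ and $B^\dagger$ be two weak formalisations of $\bar A$; since they are isomorphic it suffices to show that any two $W(k)$-algebra maps $\varphi_0,\varphi_1\colon A^\dagger\to B^\dagger$ lifting $\mathrm{id}_{\bar A}$ induce the same map on $H^i(\widetilde{\Omega}^{\kr}_{A^\dagger/W(k)})$ in every degree. I would interpolate them by a map $\varphi_t\colon A^\dagger\to B^\dagger\langle t\rangle^\dagger$ into the weak completion of $B[t]$, given on generators by $\varphi_t(x_j)=\varphi_0(x_j)+t\bigl(\varphi_1(x_j)-\varphi_0(x_j)\bigr)$ and extended by functoriality of the weak completion --- legitimate because $\varphi_1(x_j)-\varphi_0(x_j)\in pB^\dagger$ --- and then set $h(\omega):=\int_0^1\iota_{\partial_t}(\varphi_t^\ast\omega)\,dt$, with $\int_0^1 t^m\,dt$ read formally as $\tfrac{1}{m+1}$, so that $\varphi_1^\ast-\varphi_0^\ast=dh+hd$. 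The only point to verify is that $h$ is $p$-integral, and here overconvergence is not even needed for the divisibility: since $\varphi_1(x_j)-\varphi_0(x_j)\in pB^\dagger$, the $t^m\,dt$-coefficient of $\varphi_t^\ast\omega$ is divisible by $p^{m+1}$, so dividing by $m+1$ leaves a factor $p^{\,m+1-v_p(m+1)}$, a nonnegative power of $p$ for every $m\ge 0$; overconvergence is used only to see that the resulting series lies in $\widetilde{\Omega}^{\kr}_{B^\dagger/W(k)}$. Hence $\varphi_0^\ast=\varphi_1^\ast$ on cohomology in all degrees, and $H^i(\widetilde{\Omega}^{\kr}_{A^\dagger/W(k)})$ is therefore independent of all choices.
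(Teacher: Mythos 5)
A preliminary remark: the paper does not prove this theorem at all --- it quotes it from Davis--Zureick-Brown and then, in Sections 1--3, proves the stronger statements (functorial well-definedness, and the comparison in \emph{all} degrees), with Proposition \ref{Prop:homotopy} as the engine and a \v{C}ech argument over special affines for the comparison. Measured against that, your part (a) is in the right spirit --- it is the Monsky--Washnitzer integration trick, essentially the paper's operator $L$ with $T$ running from $0$ to $1$ instead of $0$ to $p$, and your $p$-divisibility estimate for $h$ is correct --- but it has a genuine gap exactly where all the difficulty sits: the construction of the interpolation $\varphi_t$. You define it ``on generators and extend by functoriality of the weak completion'', but $A^\dagger$ is a quotient $W(k)\langle x_1,\dots,x_n\rangle^\dagger/I$, not a free object, and linear interpolation does not kill $I$: for a relation $g=x^2-a$ one gets $\varphi_t(g)=t\delta\,(2\varphi_0(x)+t\delta)$ with $\delta=\varphi_1(x)-\varphi_0(x)$, which vanishes at $t=0,1$ but not in between. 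The existence of a homomorphism $\phi\colon A^\dagger\to B^\dagger\langle t\rangle$ specializing to $\varphi_0$ and $\varphi_1$ is precisely the nontrivial input; Monsky--Washnitzer needed $A/pA$ to be a complete transversal intersection for it, and the paper obtains it in general from Arabia's theorem that $A^\dagger\langle T\rangle$ is very smooth together with the relative lifting property applied to $h_0\oplus h_p\colon B\langle T\rangle\to C$. Without some such lifting argument your homotopy does not exist.

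Part (b) would not go through as written. The opening reduction to the finite-level objects $\{W_n\Omega^{\kr}\}$ and $\widetilde{\Omega}^{\kr}_{A^\dagger}/p^n$ discards exactly the overconvergence conditions that cut $W^\dagger\Omega^{\kr}_{\bar A/k}$ out of $W\Omega^{\kr}_{\bar A/k}$ and $\widetilde{\Omega}^{\kr}_{A^\dagger}$ out of its $p$-adic completion; neither complex has its cohomology computed by an inverse limit over truncations, and even if the Mittag--Leffler condition you invoke held, the limiting argument would compare the completed de~Rham complex of $\widehat{A}$ with the \emph{full} de~Rham--Witt complex --- a different theorem, and rationally a false one for the intended comparison. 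Moreover, the assertion that the local homotopy operators in degree $q$ carry denominators dividing $q!$ (whence integrality for $q<p$) is stated without construction or proof, and it is not a bookkeeping remark: it is the content of the theorem, and it is where the work in Davis--Langer--Zink and Davis--Zureick-Brown actually lies. The route that works (in Davis--Zureick-Brown, and in Section 3 of this paper without the bound $i<p$) is to cover $\Spec\bar A$ by special affines, where Davis--Langer--Zink construct an integral quasi-isomorphism $\sigma$ in all degrees, to show $t_f\simeq\sigma$ there by a homotopy argument --- this is the step that produces the restriction $i<p$ for Davis--Zureick-Brown and that Proposition \ref{Prop:homotopy} removes --- and then to glue via the \v{C}ech spectral sequence.
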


The goal of this paper is to take the argumentation of Davis and Zureick-Brown a step further and show that in the situation above the comparison map
	$$
	\widetilde{\Omega}^{\kr}_{A^\dagger/W(k)}\rightarrow W^\dagger\Omega^{\kr}_{\bar{A}/k}
	$$
induces a canonical isomorphism between integral cohomology groups
	$$
	H^i(\widetilde{\Omega}^{\kr}_{A^\dagger/W(k)})\rightarrow H^i(W^\dagger\Omega^{\kr}_{\bar{A}/k})
	$$
in all cohomological degrees. A crucial ingredient is the extension of a homotopy result by Monsky and Washnitzer. In \cite[Rem.~(3), p.~205]{formal_cohomology} they assert that if $A$ and $B$ are w.c.f.g. algebras over $W(k)$, with $A$  flat and $A/pA$ a complete transversal intersection, then two homomorphisms $\psi_1,\psi_2:A\rightarrow B$ with the same reduction modulo $p$ induce chain homotopic maps on the associated continuous de~Rham complexes
	$$\psi_1\cong \psi_2: \widetilde{\Omega}^{\kr}_{A/R}\rightarrow \widetilde{\Omega}^{\kr}_{B/R}.$$
In the first section, we show that this is in fact true without the assumptions that $A/pA$ is a complete transversal intersection and 	that $B$ is weakly finitely generated. We apply this in two instances.

In the second section we revisit integral Monsky--Washnitzer cohomology. Bearing  in mind \cite[Thm.~1.1(1)]{davis_zureickbrown}, it remains only to show that the cohomology groups $H^i_{\mathrm{MW}}(\Spec \bar{A}/W(k)):= H^i(\widetilde{\Omega}^{\kr}_{A^\dagger/W(k)})$ are functorial in $\bar{A}$.  Using the homotopy result of the previous section we don't have to reduce to transversal intersections, as the desired statement follows directly.

We turn our attention to the comparison map in the last section. As mentioned above, the problem is that it depends a priori on the choice of a Frobenius lift $f$. We use again the homotopy result mentioned above to show that two maps
$$\psi_1,\psi_2:A^\dagger\rightarrow W^\dagger(\bar{A})$$
which coincide on the reduction modulo $p$, induce chain homotopic maps on the associated complexes. At this instance it is important that the homotopy is valid in a case where the target is not weakly finitely generated.  It then suffices to invoke the universal property of the continuous de~Rham complex. Similar to \cite{davis_zureickbrown} we use now the fact that any smooth variety can be covered by special affines, on which we know that a quasi-isomorphism $\sigma: \widetilde{\Omega}^{\kr}_{A_I^\dagger/W(k)}\rightarrow W^\dagger\Omega_{\bar{A}_I/k}^{\kr}$ exists.

In summary we obtain the following result.

\begin{thm*} Let $\bar{A}$ be a smooth finite $k$-algebra.
\renewcommand{\labelenumi}{(\alph{enumi})} \begin{enumerate}
\item The integral Monsky--Wahsnitzer cohomology groups $H^i_{\mathrm{MW}}(\Spec \bar{A}/W(k))$ are well-defined up to unique isomorphism and are functorial in non-singular affine $k$-varieties.
\item For all $i\geqslant 0$ there is a well-defined and functorial isomorphism $H^i_{MW}(\bar{A}/W(k)) \xrightarrow{\simeq}  H^i(W^\dagger\Omega^{\kr}_{\bar{A}})$ between integral Monsky--Washnitzer and overconvergent de~Rham--Witt cohomology.
\end{enumerate}
\end{thm*}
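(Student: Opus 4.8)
\emph{Overall strategy.} Both parts reduce to combining three inputs: the well-definedness statement \cite[Thm.~1.1(1)]{davis_zureickbrown}, the local comparison of Davis--Langer--Zink over special affines, and the homotopy theorem of Section~1, which is exactly what lets one dispose of all auxiliary choices (weak formalisation, lift of a morphism, Frobenius lift).

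\emph{Part (a).} The independence of $\widetilde{\Omega}^{\kr}_{A^\dagger/W(k)}$, hence of its cohomology, from the chosen weak formalisation is \cite[Thm.~1.1(1)]{davis_zureickbrown}. To make this canonical and functorial I would argue as follows. Given a morphism $\bar\varphi\colon\bar A\to\bar B$ of non-singular affine $k$-algebras with weak formalisations $A^\dagger$, $B^\dagger$, the universal lifting property of w.c.f.g.\ algebras (Monsky--Washnitzer, \cite{formal_cohomology}) produces a ring homomorphism $\varphi\colon A^\dagger\to B^\dagger$ lifting $\bar\varphi$; any two such lifts agree modulo $p$, hence by the homotopy theorem of Section~1 induce chain homotopic maps $\widetilde{\Omega}^{\kr}_{A^\dagger/W(k)}\to\widetilde{\Omega}^{\kr}_{B^\dagger/W(k)}$, so there is a well-defined $\bar\varphi^{\ast}\colon H^i_{\mathrm{MW}}(\bar B/W(k))\to H^i_{\mathrm{MW}}(\bar A/W(k))$, and compatibility with composites and identities is checked the same way. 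Applying this to the two isomorphisms between two weak formalisations of one $\bar A$ (which exist by \cite[(2.4.4)]{vanderput}) gives the canonical identification, proving (a).

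\emph{Part (b), independence and functoriality.} Fix a Frobenius lift $f$ on $A^\dagger$; it yields the comparison morphism $t_f$, which factors as $\widetilde{\Omega}^{\kr}_{A^\dagger/W(k)}\to\widetilde{\Omega}^{\kr}_{W^\dagger(\bar A)/W(k)}\to W^\dagger\Omega^{\kr}_{\bar A/k}$, the first arrow coming from the ring homomorphism $t_f\colon A^\dagger\to W^\dagger(\bar A)$ by the universal property of continuous differentials. Two Frobenius lifts $f_1,f_2$ are congruent modulo $p$; one checks that the associated ring homomorphisms $t_{f_1},t_{f_2}\colon A^\dagger\to W^\dagger(\bar A)$ are then congruent modulo $p$ as well, so the homotopy theorem of Section~1 applies --- and this is the point where it is essential that the target $W^\dagger(\bar A)$ need not be weakly finitely generated, precisely the extension carried out in the first section. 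Hence $t_{f_1}\cong t_{f_2}$ as maps of complexes, and we obtain a map $c_{\bar A}\colon H^i_{\mathrm{MW}}(\bar A/W(k))\to H^i(W^\dagger\Omega^{\kr}_{\bar A})$ independent of $f$. The same homotopy argument together with part (a) shows $c_{\bar A}$ is functorial in $\bar A$: a morphism $\bar\varphi$ cannot in general be made compatible with chosen Frobenius lifts, but the relevant square commutes up to the homotopies just produced.

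\emph{Part (b), the isomorphism, and the main obstacle.} It remains to see that $c_{\bar A}$ is bijective in every degree. Following Davis--Zureick-Brown, I would cover $\Spec\bar A$ by special affines $\Spec\bar A_I$ --- chosen so that all finite intersections are again special --- on each of which the comparison $\sigma_I$ is an integral quasi-isomorphism by \cite{davis_langer_zink}; by Section~1 and the part of (b) already established, the $\sigma_I$ agree up to chain homotopy over the overlaps, so they assemble into a morphism of the associated \v{C}ech bicomplexes. Since both $H^i_{\mathrm{MW}}(\bar A/W(k))$ and $H^i(W^\dagger\Omega^{\kr}_{\bar A})$ are computed by the \v{C}ech complex of such a cover --- on the overconvergent de~Rham--Witt side because $W^\dagger\Omega^{\kr}$ is a complex of Zariski sheaves acyclic on affines \cite{davis_langer_zink}, on the Monsky--Washnitzer side by the analogous acyclicity of the continuous de~Rham complex --- and $c_{\bar A}$ is a quasi-isomorphism on each term, it is a quasi-isomorphism on totalisations, which gives the isomorphism for all $i\geqslant 0$. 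The delicate point, and where I expect to spend the real effort, is making this descent step genuinely \emph{integral}: one must verify that both theories satisfy \v{C}ech descent for a cover by special affines before inverting $p$ (the rational statement being the one already known from the Davis--Langer--Zink theorem recalled in the introduction), and that the homotopies between the local comparison maps can be organised coherently over all the multiple intersections so that they actually glue into a map of totalisations rather than merely term-by-term comparisons.
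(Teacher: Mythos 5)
Your part (a) and the first half of part (b) (independence of the weak formalisation, of the lift of a morphism, and of the Frobenius lift, all via the homotopy result of Section 1 applied with target $W^\dagger(\bar A)$) follow the paper's argument essentially verbatim, and your emphasis on why the target need not be weakly finitely generated is exactly the right one. The one place where you diverge is the final descent step, and there your proposed route creates the very difficulty you flag at the end. You set out to prove that the \emph{global} map $c_{\bar A}$ induced by $t_f$ is bijective, but then switch to assembling the local Davis--Langer--Zink comparisons $\sigma_I$ into a morphism of \v{C}ech bicomplexes ``up to chain homotopy over the overlaps.'' Chain homotopies between term-by-term maps do not by themselves produce a map of totalisations --- one would need a full system of higher coherences --- and even if they did, you would still have to identify the resulting map with $c_{\bar A}$. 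So as written this step has a genuine gap.

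The fix is available from ingredients you already have, and it is what the paper does: the single globally defined $t_f$ sheafifies to a morphism of complexes of Zariski sheaves $\mathcal{F}^{\kr}_{A^\dagger}\rightarrow W^\dagger\Omega^{\kr}_{\Spec\bar A/k}$ \cite[Prop.~3.3]{davis_zureickbrown}, hence induces \emph{on the nose} a morphism of \v{C}ech spectral sequences for a cover by special affines all of whose finite intersections are special \cite[Prop.~3.5]{davis_zureickbrown}. The local maps $\sigma_I$ of \cite[Thm.~3.19]{davis_langer_zink} are then used only as a verification device: both $\sigma_I$ and $t_f$ reduce to the identity modulo $p$, so Lemma \ref{Lem:Homotopy} makes them chain homotopic over each $U_I$, whence $t_f$ is a quasi-isomorphism on each $U_I$; the $E_2$-comparison (together with the acyclicity of the terms of both complexes on affines, which gives the identification of hypercohomology of $U_I$ with the cohomology of the global sections, integrally) then yields the isomorphism in all degrees. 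No gluing of the $\sigma_I$ and no coherence of homotopies is ever needed, and your ``delicate point'' about integral \v{C}ech descent reduces to the quoted acyclicity statements rather than to a new argument.
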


\subsection*{Acknowledgements}
We are indebted to  Kennichi~Bannai and Kazuki~Yamada for helpful discussions and  suggestions related to the content of this paper.  Bernard~Le~Stum's insight on the topic, which was passed on to us by Christopher~J.~Davis, lead us to consider the paper of Alberto~Arabia. We would like to thank both of them for generously sharing their knowledge and giving us important feedback.

\section{A homotopy result}

 The heart of the following proposition is essentially  \cite[Rem.~(3), p.~205]{formal_cohomology} with the additional observation that it is neither necessary to assume that the target of the maps in question is weakly finitely generated nor that the reduction of the source is a complete transversal intersection. While the latter allows us to shorten the proofs in Sections 2 and 3 considerably, the first is crucial because we would like to apply the statement to maps $\psi_1,\psi_2 \colon A^\dagger \rightarrow W^\dagger({\bar{A}})$ for a smooth $k$-algebra $\bar{A}$ of finite type, and while $W^\dagger(\bar{A})$ is weakly complete \cite[Prop.~2.28]{davis_langer_zink2}, it is in general not weakly finitely generated. We recall the proof of \cite[Rem.~(3), p.~205]{formal_cohomology} with the necessary modifications.

\begin{prop}\label{Prop:homotopy}
Let $B$ be a weakly complete $W(k)$-algebra and $\bar{A}$ a smooth scheme of finite type over $k$. Let $A^\dagger$ be the weak completion of a smooth $W(k)$-lift $A$ of $\bar{A}$. Then two homomorphisms $\psi_1,\psi_2:A^\dagger\rightarrow B$ with the same reduction modulo $p$ induce chain homotopic maps on the associated continuous de~Rham complexes
	$$\psi_1\simeq \psi_2: \widetilde{\Omega}^{\kr}_{A^\dagger / W(k)}\rightarrow \widetilde{\Omega}^{\kr}_{B/W(k)}.$$
\end{prop}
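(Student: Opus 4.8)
The plan is to obtain the chain homotopy from an algebraic interpolation of $\psi_1$ and $\psi_2$ over the weakly complete polynomial ring $B\langle t\rangle:=B[t]^\dagger$, followed by integration along the fibre $[0,1]$; the one delicate point will be that this integration is \emph{integral}. Fix a presentation $A^\dagger=P/\mathfrak{a}$ with $P=W(k)\langle X_1,\dots,X_n\rangle^\dagger$ weakly complete free over $W(k)$ (possible since $\bar A$ is of finite type). I claim there is a $W(k)$-algebra homomorphism $\Psi\colon A^\dagger\to B\langle t\rangle$ whose composites with the two evaluations $\mathrm{ev}_0,\mathrm{ev}_1\colon B\langle t\rangle\to B$, $t\mapsto 0,1$, are $\psi_1$ and $\psi_2$, and whose reduction modulo $p$ is the constant (i.e.\ $t$-independent) map $\bar\psi:=\psi_1\bmod p=\psi_2\bmod p$. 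To build $\Psi$: lift $\psi_i$ along $P\twoheadrightarrow A^\dagger$ to $\widetilde\psi_i\colon P\to B$ (possible because $P$ is free, choosing the two lifts to agree modulo $p$); the linear interpolation $\Psi_0\colon P\to B\langle t\rangle$, $X_j\mapsto\widetilde\psi_1(X_j)+t\bigl(\widetilde\psi_2(X_j)-\widetilde\psi_1(X_j)\bigr)$, is an honest ring homomorphism because $P$ is free, it has the prescribed boundary values and reduction, and $\Psi_0(\mathfrak a)\subseteq pB\langle t\rangle$. One then corrects $\Psi_0$ successively modulo $p^2,p^3,\dots$ so as to kill $\mathfrak a$ while preserving the boundary values: this succeeds because $\bar A$ is smooth over $k$, so $A^\dagger$ has the requisite lifting property against the square-zero extensions $B\langle t\rangle/p^{m+1}\to B\langle t\rangle/p^{m}$, whence the obstruction to each successive correction vanishes; and because a correction with prescribed values at $t=0$ and $t=1$ always exists --- corrections are parametrised by $W(k)$-derivations of $A^\dagger$ with values in $pB\langle t\rangle/p^{m+1}$, and for two derivations $\delta_0,\delta_1$ taking values in $B$ the ``linear interpolation'' $a\mapsto\delta_0(a)+t\bigl(\delta_1(a)-\delta_0(a)\bigr)$ is again a derivation (the derivation identity, unlike the homomorphism identity, is linear in the map). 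As $B\langle t\rangle$ is $p$-adically complete, the limit $\Psi$ exists and, killing $\mathfrak a$, factors through $A^\dagger$.

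Next, every continuous $W(k)$-differential $q$-form of $B\langle t\rangle$ is uniquely $\alpha(t)+\beta(t)\,dt$ with $\alpha(t)$ a $q$-form and $\beta(t)$ a $(q-1)$-form of $B$ depending overconvergently on $t$, and $d\bigl(\alpha(t)+\beta(t)\,dt\bigr)=d_B\alpha(t)+\bigl(\pm\partial_t\alpha(t)+d_B\beta(t)\bigr)dt$. Writing $\Psi_*(\omega)=\alpha_\omega(t)+\beta_\omega(t)\,dt$ for $\omega\in\widetilde\Omega^q_{A^\dagger/W(k)}$, I set $h(\omega):=\int_0^1\beta_\omega(\tau)\,d\tau$, where $\int_0^1$ is term-by-term integration in $t$ applied to the $B$-coefficients ($\int_0^1\tau^m\,d\tau=\tfrac1{m+1}$). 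Since $\Psi_*$ commutes with $d$, the $dt$-component of $\Psi_*(d\omega)$ is $\pm\partial_t\alpha_\omega(t)+d_B\beta_\omega(t)$, and using $\int_0^1\partial_\tau\alpha_\omega(\tau)\,d\tau=\alpha_\omega(1)-\alpha_\omega(0)=\psi_{2,*}(\omega)-\psi_{1,*}(\omega)$ (no denominators enter here, the factor $m$ of $\partial_\tau\tau^m$ cancelling the $\tfrac1m$ of the integral) one gets, after the usual sign adjustment, $dh+hd=\psi_{2,*}-\psi_{1,*}$.

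The remaining --- and only genuinely delicate --- point is that $h$ takes values in $\widetilde\Omega^{\kr}_{B/W(k)}$ itself and not merely in $\widetilde\Omega^{\kr}_{B/W(k)}\otimes\mathbb Q$; this is where the hypothesis $\psi_1\equiv\psi_2\bmod p$ enters essentially, and where the boundedness of the cohomological degree needed in the rational statements of Davis--Langer--Zink and Davis--Zureick-Brown becomes superfluous. Because $\Psi$ reduces to the constant map modulo $p$, one has $\Psi(a)-\psi_1(a)\in pB\langle t\rangle$ for every $a$, so $\partial_t\circ\Psi$ has image in $pB\langle t\rangle$ and hence $\beta_\omega(t)$ lies in $p$ times the module of $(q-1)$-forms depending overconvergently on $t$. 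The denominators $\tfrac1{m+1}$ from $\int_0^1$ are then absorbed: expanding $\beta_\omega$ in terms of the $\partial_t\Psi(X_j)$, the $d_B\Psi(X_j)$ and $\Psi(a)$, one combines the fact that the $t$-expansion coefficients of an overconvergent series are $p$-adically small (their valuation growing linearly in the exponent) with the observation that the $t^m$-coefficient of $\partial_t f$ is $(m+1)$ times the $t^{m+1}$-coefficient of $f$, so that the divisibility by $p$ of a coefficient of $\Psi$ becomes divisibility by $p(m+1)$ of the corresponding coefficient of $\beta_\omega$, meeting the denominator head-on; beyond this the only arithmetic input is the elementary bound $v_p\!\bigl(\tfrac{p^{j}}{j}\bigr)=j-v_p(j)\geqslant 1$, valid for \emph{every} $j\geqslant1$ and not only for $j<p$ --- it is this that removes the dimension/degree restriction. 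One checks moreover that $h$ is continuous for the weakly complete topologies, so that indeed $h(\omega)\in\widetilde\Omega^{q-1}_{B/W(k)}$. The main obstacle is exactly this last integrality bookkeeping; everything else is formal, and, as announced, no hypothesis on $B$ beyond weak completeness and no complete-intersection hypothesis on $\bar A$ is used --- $B\langle t\rangle$ makes sense for any weakly complete $B$, and the smoothness of $\bar A$ replaces the complete-transversal-intersection assumption of \cite{formal_cohomology} in the construction of $\Psi$.
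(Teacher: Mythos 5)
Your overall architecture (interpolate $\psi_1,\psi_2$ by a map $\Psi\colon A^\dagger\to B\langle t\rangle$, then integrate along the fibre) is the same as the paper's, and your hand-rolled obstruction-theoretic construction of $\Psi$ is an acceptable substitute for the paper's appeal to very smoothness of $A^\dagger\langle T\rangle$ and the relative lifting property. The gap is exactly at the point you flag as delicate: the integrality of $h=\int_0^1$. Your ``head-on'' cancellation of $\tfrac1{m+1}$ against the factor $m+1$ coming from $\partial_t$ works only for the \emph{one} tensor factor of $\beta_\omega$ that carries $\partial_t\Psi$; for $\omega=a_0\,da_1\wedge\dots\wedge da_q$ the $t^m$-coefficient of $\beta_\omega$ is a convolution over decompositions $m=m_0+\dots+m_q$, and when most of the $t$-degree sits in the factors $\Psi(a_0)$, $d_B\Psi(a_j)$ you only get one factor of $p$ per nonconstant factor plus whatever the overconvergence bound $v_p\geqslant\delta m-c$ gives --- and neither dominates $v_p(m+1)$ for all $m$. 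Concretely, take $p=2$, $B=W(k)\langle x,y,g,h\rangle^\dagger$, $\psi_1=\psi_2=\mathrm{id}$, and the admissible interpolation $\Psi(x)=x+2(t^7-t)g$, $\Psi(y)=y+2(t^2-t)h$ (constant mod $2$, correct boundary values at $t=0,1$). For $\omega=dx\wedge dy$ the $dt$-component of $\Psi_*\omega$ contains the cross terms $-4(2t-1)(t^7-t)\,h\,dg+4(7t^6-1)(t^2-t)\,g\,dh$, and $\int_0^1(2t-1)(t^7-t)\,dt=-\tfrac{5}{72}$, so $h(\omega)$ acquires the term $\tfrac{5}{18}\,h\,dg$, which is not a $2$-adic integer. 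So for a $\Psi$ that is merely constant modulo $p$, your operator $h$ simply does not preserve $\widetilde{\Omega}^{\kr}_{B/W(k)}$; no amount of bookkeeping with $v_p\bigl(\tfrac{p^j}{j}\bigr)$ rescues it.

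What is actually needed is that the $t^m$-coefficients of $\Psi$ be divisible by $p^m$ (not just by $p$), i.e.\ that $\Psi$ depend on $t$ only through $T=pt$. This is the paper's key device: it works with the $(p,T)$-adic weak completion $B\langle T\rangle$ and evaluates at $T=0$ and $T=p$ rather than at $t=0$ and $t=1$, so that the fibre integration becomes $L(\omega)=\sum_i\tfrac{p^{i+1}}{i+1}\omega''_i$, whose coefficients $\tfrac{p^{i+1}}{i+1}=\int_0^p\tau^i\,d\tau$ are $p$-integral for \emph{every} $i$ with no hypothesis on the coefficients of $\Psi$ --- this, and not the bound $v_p\bigl(\tfrac{p^j}{j}\bigr)\geqslant 1$ applied to your $\int_0^1$, is what removes the degree restriction. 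Your proof can be repaired by rerunning your construction of $\Psi$ with target $B\langle T\rangle$ and boundary evaluations $T\mapsto 0,p$ (equivalently, forcing every correction to lie in the subring of series in $pt$), after which the rest of your argument goes through; but as written the integrality step is false.
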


\begin{proof}
A homotopy as desired can be obtained by introducing an extra variable $T$. Denote by $B\langle T\rangle$ the weak completion of the $W(k)[T]$-algebra $B[T]$ with respect to the ideal $(p,T)$.  The reduction of $B\langle T\rangle$ modulo the ideal $(p,T)$ is obviously $B/pB$ and according to \cite{formal_cohomology} we can think of it as restricted power series ring over $B$. Consider the natural maps 
	$$h_0,h_p:B\langle T\rangle\rightarrow B$$
sending $T$ to $0$ and $p$ respectively. Consider the complex of continuous differentials
	 $$\widetilde{\Omega}^{\kr}_{B\langle T\rangle /W(k)}= \Omega^{\kr}_{B\langle T\rangle /W(k)}/\bigcap  (p,T)^i\Omega^{\kr}_{B\langle T\rangle /W(k)},$$ 
 which is $(p,T)$-separated instead of $(p)$-separated. 

In a first step we observe that the induced maps 
	$$h_0,h_p: \widetilde{\Omega}^{\kr}_{B\langle T\rangle /W(k)}\rightarrow \widetilde{\Omega}^{\kr}_{B /W(k)}$$ 
are chain homotopic. Namely, by induction on the degree one shows easily that an element $\omega\in \widetilde{\Omega}^{\kr}_{B\langle T\rangle /W(k)}$ may be represented by a power series in $T$ as
\begin{equation}\label{omega}\omega=\sum_{i=0}^\infty T^i\left(\omega'_i+(dT\wedge\omega^{\prime\prime}_i)\right),\end{equation}
where for all $i\geqslant 0$ the elements $\omega'_i$ and $\omega^{\prime\prime}_i$ are in $\widetilde{\Omega}^{\kr}_{B /W(k)}$. For such a power series one sets
	$$L(\omega)=\sum_{i=0}^\infty\left(\frac{p^{i+1}}{i+1}\right)\omega^{\prime\prime}_i,$$
which is indeed a well-defined element of $\widetilde{\Omega}^{\kr}_{B /W(k)}$ because $i+1$ divides $p^{i+1}$ and moreover the fractions $\frac{p^{i+1}}{i+1}$ converge $p$-adically fast enough to zero. An easy computation using representations (\ref{omega})  shows that in each degree one obtains in fact an equality
	$$h_p-h_0=d_BL+Ld_{B\langle T\rangle}.$$

In a second step we show that two maps $\psi_1,\psi_2:A^\dagger\rightarrow B$ as in the statement of the proposition are strongly homotopic in the sense that there exists a map $\phi: A^\dagger\rightarrow B\langle T\rangle$ such that
$$h_0\circ \phi=\psi_1\qquad\text{ and }\qquad h_p\circ\phi =\psi_2.$$
 Let $C$ be the image of the map 
$$h_0\oplus h_p:B\langle T\rangle\rightarrow B\oplus B$$
which consists of pairs $(x,y)$ such that $\bar{x}=\bar{y}$ in $B/pB$. To make $h_0\oplus h_p$ a map of $W(k)[T]$-algebras one can give $C$ the structure of an $W(k)[T]$-algebra as which it is isomorphic to $B\langle T\rangle/\left(T(T-p)\right)$. Hence the reduction of $C$ modulo the ideal $(p,T)$ is $B/pB$ as well. As $\psi_1$ coincides with $\psi_2$ modulo $(p)$, the sum $\psi_1\oplus\psi_2:A^\dagger \rightarrow B\oplus B$ factors through $C$ and extends naturally to a map $A^\dagger\langle T\rangle\rightarrow C$. Modulo $(p,T)$ we obtain the diagram
$$\xymatrix{ && B/pB \ar[dd]^{\overline{h_0\oplus h_p}}\\&&\\ \bar{A}= A^\dagger/pA^\dagger \ar[uurr]^{\bar{\phi}} \ar[rr]^{\overline{\psi_1\oplus \psi_2}}& & B/pB}.$$
By \cite[Thm.~3.3.2~(b)]{arabia} the weak completion $A^\dagger\langle T\rangle$ of $\bar{A}$ over $(W(k)[T],(p,T))$ is very smooth and we make use of the relative lifting property \cite[Def.~2.4]{formal_cohomology} applied to the surjective map of weakly complete $W(k)[T]$-algbras $h_0\oplus h_p: B\langle T\rangle \rightarrow C$ in order to get a commutative diagram:
$$\xymatrix{ && B\langle T\rangle \ar[dd]^{h_0\oplus h_p}\\&& \\
A^\dagger\langle T\rangle \ar@{-->}[uurr]^{\exists \phi } \ar[rr]^{\psi_1\oplus \psi_2} && C}$$
Restricting $\phi$ to $A^\dagger$ results in the desired map.

Finally, putting the two observations together, we see that $L\circ \phi$ is a homotopy between $\psi_1$ and $\psi_2$.
\end{proof}

\section{Functoriality of integral Monsky--Washnitzer cohomology}

Let $\bar{A}$ be a smooth finite $k$-algebra. In \cite{davis_zureickbrown} Davis and Zureick-Brown prove the existence of an isomorphism
	$$	
	H^i(\widetilde{\Omega}^{\kr}_{A^\dagger/W(k)})\xrightarrow{\simeq} H^i(\widetilde{\Omega}^{\kr}_{(A')^\dagger/W(k)})
	$$
for two different smooth lifts $A$ and $A'$ with weak completions $A^\dagger$ and $(A')^\dagger$ of a non-singular affine $k$-variety $\bar{A}$. This section is nothing but the observation that their argument can also be used to prove functoriality of integral Monsky--Washnitzer cohomology in $\bar{A}$ in order to obtain the following result.

\begin{prop}\label{Prop:MWFunctoriality}
The cohomology groups
	$$	
	H^i_{\mathrm{MW}}(\Spec \bar{A}/W(k)):= H^i(\widetilde{\Omega}^{\kr}_{A^\dagger/W(k)})  
	$$
are well-defined up to unique isomorphism and are functorial in non-singular affine $k$-varieties.
\end{prop}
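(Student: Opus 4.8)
The plan is to produce, for every $k$-algebra homomorphism $\bar{f}\colon\bar{A}\to\bar{B}$ between smooth finite $k$-algebras, a morphism $H^i_{\mathrm{MW}}(\Spec\bar{A}/W(k))\to H^i_{\mathrm{MW}}(\Spec\bar{B}/W(k))$, and then to check that it is independent of all auxiliary choices and compatible with composition; the well-definedness ``up to unique isomorphism'' will come out as the special case $\bar{f}=\mathrm{id}$ applied to two different lifts. First I would choose smooth $W(k)$-lifts $A$ and $B$ of $\bar{A}$ and $\bar{B}$, with weak completions $A^\dagger$ and $B^\dagger$. Since $A^\dagger$ is a smooth weak formalisation of $\bar{A}$, it enjoys the relative lifting property \cite[Def.~2.4]{formal_cohomology}; applying it to the surjection $B^\dagger\twoheadrightarrow B^\dagger/pB^\dagger=\bar{B}$ and the homomorphism $\bar{A}\xrightarrow{\bar{f}}\bar{B}$ produces a homomorphism $\psi\colon A^\dagger\to B^\dagger$ whose reduction modulo $p$ is $\bar{f}$. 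By the universal property of the continuous de~Rham complex, $\psi$ induces a morphism of complexes $\widetilde{\Omega}^{\kr}_{A^\dagger/W(k)}\to\widetilde{\Omega}^{\kr}_{B^\dagger/W(k)}$, hence the desired morphism on cohomology.

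The essential point is that this morphism does not depend on the choice of $\psi$. If $\psi_1,\psi_2\colon A^\dagger\to B^\dagger$ both reduce to $\bar{f}$ modulo $p$, then, $B^\dagger$ being weakly complete, Proposition~\ref{Prop:homotopy} provides a chain homotopy $\psi_1\simeq\psi_2$, so $\psi_1$ and $\psi_2$ induce the same map on cohomology. This is precisely where it pays off that Proposition~\ref{Prop:homotopy} no longer requires the reduction of the source to be a complete transversal intersection: unlike in \cite{davis_zureickbrown} we need not first reduce to that case and may work with arbitrary smooth lifts directly.

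From this, the well-definedness up to unique isomorphism (the content of \cite[Thm.~1.1(1)]{davis_zureickbrown}, here reproved) follows formally. Taking $\bar{f}=\mathrm{id}_{\bar{A}}$ and two weak completions $A^\dagger$, $(A')^\dagger$ of $\bar{A}$, one obtains lifts $\varphi\colon A^\dagger\to(A')^\dagger$ and $\varphi'\colon(A')^\dagger\to A^\dagger$ of the identity; the composites $\varphi'\circ\varphi$ and $\varphi\circ\varphi'$ are lifts of the respective identity maps, hence homotopic to them by Proposition~\ref{Prop:homotopy}, so the induced maps on cohomology are mutually inverse isomorphisms, and any two lifts of $\mathrm{id}_{\bar{A}}$ induce the same isomorphism, whence uniqueness. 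For functoriality, given $\bar{A}\xrightarrow{\bar{f}}\bar{B}\xrightarrow{\bar{g}}\bar{C}$, choose lifts $\psi$ of $\bar{f}$ and $\chi$ of $\bar{g}$ as above; then $\chi\circ\psi$ is a lift of $\bar{g}\circ\bar{f}$, so by the independence statement it induces the composite of the maps induced by $\bar{f}$ and $\bar{g}$, while a lift of $\mathrm{id}$ visibly induces the identity. Hence $H^i_{\mathrm{MW}}(-/W(k))$ is a functor on non-singular affine $k$-varieties.

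The proof involves no genuinely hard step: the substance lies in Proposition~\ref{Prop:homotopy}. The only thing requiring care is the bookkeeping --- ensuring that the lift $\psi$ exists (it does, because $A^\dagger$, being a smooth weak formalisation of $\bar{A}$, has the relative lifting property \cite[Def.~2.4]{formal_cohomology} against the surjection $B^\dagger\twoheadrightarrow\bar{B}$), and verifying at each stage that the relevant maps into $B^\dagger$, into $(A')^\dagger$, and so on, really do agree modulo $p$, so that Proposition~\ref{Prop:homotopy} applies verbatim.
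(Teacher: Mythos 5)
Your proposal is correct and follows essentially the same route as the paper: both reduce everything to the fact that two lifts of the same map $\bar{A}\to\bar{B}$ agree modulo $p$ and hence, by Proposition~\ref{Prop:homotopy}, induce the same map on cohomology, from which uniqueness of the isomorphism between lifts and functoriality follow formally. You merely spell out a few steps the paper leaves implicit (existence of lifts via the relative lifting property and the explicit check that composites of lifts of the identity are mutually inverse).
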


\begin{lem}\label{Lem:MapsOnCohCoin}
	Let $\Spec \bar{A}$ and $\Spec \bar{B}$ be two non-singular affine $k$-varieties and $\bar{\varphi}:\bar{A}\rightarrow\bar{B}$ a homomorphism. Let us choose two smooth lifts $A$ and $B$ over $W(k)$ with weak completions  $A^\dagger$ and $B^\dagger$ and two maps $\varphi_1,\varphi_2:A^\dagger\rightarrow B^\dagger$ lifting $\bar{\varphi}$. Then the induced maps
	$$	
	\varphi_1^*,\varphi_2^*\colon H^i(\widetilde{\Omega}^{\kr}_{A^\dagger/W(k)})\rightarrow H^i(\widetilde{\Omega}^{\kr}_{B^\dagger/W(k)})  
	$$
coincide.
\end{lem}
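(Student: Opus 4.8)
The plan is to deduce this lemma directly from Proposition~\ref{Prop:homotopy}. The key observation is that the two lifts $\varphi_1,\varphi_2 \colon A^\dagger \to B^\dagger$ are homomorphisms of weakly complete $W(k)$-algebras with the \emph{same} reduction modulo $p$, namely $\bar\varphi \colon \bar A \to \bar B$. Moreover $B^\dagger$ is a weakly complete $W(k)$-algebra (it is even w.c.f.g.), and $A^\dagger$ is by hypothesis the weak completion of a smooth $W(k)$-lift $A$ of the smooth finite type $k$-scheme $\Spec\bar A$. Hence the hypotheses of Proposition~\ref{Prop:homotopy} are satisfied with $B$ there taken to be $B^\dagger$ here.

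Concretely, I would argue as follows. First, invoke Proposition~\ref{Prop:homotopy} to obtain a chain homotopy $L$ (arising, as in that proof, as $L \circ \phi$ for a suitable strong homotopy $\phi \colon A^\dagger \to B^\dagger\langle T\rangle$) between the induced maps of continuous de~Rham complexes
$$
\varphi_1^{\kr},\varphi_2^{\kr} \colon \widetilde{\Omega}^{\kr}_{A^\dagger/W(k)} \longrightarrow \widetilde{\Omega}^{\kr}_{B^\dagger/W(k)},
$$
so that $\varphi_2^{\kr} - \varphi_1^{\kr} = d_{B^\dagger} L + L\, d_{A^\dagger}$ in each degree. Second, pass to cohomology: a chain homotopy between two maps of complexes induces equality of the maps they induce on all cohomology groups, so $\varphi_1^* = \varphi_2^*$ on $H^i(\widetilde{\Omega}^{\kr}_{A^\dagger/W(k)})$ for every $i \geqslant 0$. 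This is the entire content of the lemma.

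I do not expect a genuine obstacle here, since the real work has been front-loaded into Proposition~\ref{Prop:homotopy}; the only point that needs a word of care is checking that the hypotheses of that proposition genuinely apply --- in particular that $\varphi_1$ and $\varphi_2$ do reduce to the \emph{same} map mod $p$, which is exactly the assumption that both lift $\bar\varphi$, and that $A^\dagger$ has the required form as a weak completion of a smooth lift, which is part of the setup. If one wanted to be maximally self-contained one could also recall explicitly why two chain-homotopic maps agree on cohomology, but that is standard homological algebra and I would simply cite it. The subsequent Proposition~\ref{Prop:MWFunctoriality} then follows formally from Lemma~\ref{Lem:MapsOnCohCoin}: well-definedness up to unique isomorphism comes from applying the lemma to $\bar A = \bar B$ and $\bar\varphi = \mathrm{id}$ (any two lifts of the identity to maps between any two weak completions induce the same, hence canonical and transitive, comparison isomorphisms), and functoriality comes from applying it to a general $\bar\varphi$ together with the existence of \emph{some} lift $\varphi_1$, guaranteed by the relative lifting property for weak completions of smooth lifts.
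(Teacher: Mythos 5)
Your proof is correct and matches the paper's, which simply observes that the lemma is a special case of Proposition~\ref{Prop:homotopy} (with the weakly complete target taken to be $B^\dagger$) and that chain-homotopic maps agree on cohomology. Your extra verification of the hypotheses is exactly the right point of care, but nothing beyond the paper's one-line argument is needed.
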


\begin{proof}
This is a special case of Proposition \ref{Prop:homotopy}
\end{proof}

\begin{proof}[Proof of Proposition \ref{Prop:MWFunctoriality}]
For the proof of the theorem it remains to show that the Monsky--Washnitzer cohomology for two different lifts are not only isomorphic but canonically isomorphic. For two different dagger algebras $A^\dagger$, $(A')^\dagger$ lifting $\bar{A}$ there is always a  lift
	$$	
	\varphi: A^\dagger\rightarrow (A')^\dagger 
	$$
of the identity which is in general not unique. By the independence of the lift on cohomology shown in the above lemma, there is a canonical isomorphism
	$$	
	H^i(\widetilde{\Omega}^{\kr}_{A^\dagger/W(k)})\xrightarrow{\simeq} H^i(\widetilde{\Omega}^{\kr}_{(A')^\dagger/W(k)}).
	$$
\end{proof}

\begin{rem}
Keeping in mind that for a smooth affine $k$-variety $X$ the rational Monsky--Washnitzer complex computes rigid cohomology our result identifies immediately a canonical $W(k)$-lattice on the cohomology groups $H_{\text{rig}}^i(X/K)$. What is more,  the functoriality of integral Monsky--Washnitzer cohomology induces such a lattice on cohomology groups for smooth quasi-projective $k$-schemes as well. Namely it allows us to  glue the integral structure along an appropriate finite cover of $X$ by smooth affine schemes to obtain the desired lattice on $H_{\text{rig}}^i(X/K)$.
\end{rem}

\section{An unconditional comparison}

In this section, we want to use intrinsic properties of weakly complete weakly finitely generated (w.c.f.g) algebras to obtain a comparison result between integral Monsky--Washnitzer and overconvergent de~Rham--Witt cohomology.

To define a comparison map we consider for a non-singular affine variety $\Spec\bar{A}$ over $k$, a weak formalisation $A^\dagger$  and a lifting $f:A^\dagger\rightarrow A^\dagger $ of the Frobenius morphism $\Frob:\bar{A}\rightarrow \bar{A}$.

Recursively, one can define a unique ring homomorphism 
	$$
	s_f:A^\dagger\rightarrow W(A^\dagger)
	$$
such that the ghost components of $s_f(a)$ for $a\in A^\dagger$ are given by $(a,f(a),f^2(a),\ldots)$. As noted in \cite[(0.1.3.16)]{illusie} it is functorial in the triple $(\bar{A},A^\dagger,f)$ in the sense that if $(\bar{A}',{A'}^\dagger, f')$ is another such triple and $\varphi:A^\dagger\rightarrow {A'}^\dagger$ a map commuting with the Frobenius lifts, i.e. the left square of the following diagram commutes, then the right diagram commutes as well
	$$
	\xymatrix{A^\dagger\ar[r]^f \ar[d]^\varphi & A^\dagger \ar[d]^\varphi \ar[r]^{s_f} & W(A^\dagger) \ar[d]^{W(\varphi)}\\ {A'}^\dagger \ar[r]^{f'} & {A'}^\dagger \ar[r]^{s_{f'}} & W({A'}^\dagger).}
	$$
 Let  $t_f=W(\pi)\circ s_f :A^\dagger\rightarrow W(\bar{A})$ be the composition of $s_f$ with the map induced by the reduction $\pi:A^\dagger\rightarrow \bar{A}$. According to \cite[Prop.~3.2]{davis_langer_zink} it factors through $W^\dagger(\bar{A})$ and one obtains
	$$
	t_f:A^\dagger\rightarrow W^\dagger(\bar{A})
	$$
which by the universal property of the continuous de~Rham complex  finally  results in the desired comparison map between complexes $t_f:\widetilde{\Omega}^{\kr}_{A^\dagger/W(k)}\rightarrow W^\dagger\Omega^{\kr}_{\bar{A}/k}$. One observes right away that the reduction of $t_f$ modulo $p$ is the identity. We aim to show, that the induced map on cohomology is an isomorphism which is independent of the choice of Frobenius lift.

\begin{lem}\label{Lem:Homotopy}
Let $\bar{A}$ be a smooth $k$-algebra of finite type and $A^\dagger$ a weak formalisation of $\bar{A}$ over $W(k)$. Let 
\[
	\psi_1,\psi_2 \colon A^\dagger \rightarrow W^\dagger({\bar{A}})
\]
be two morphisms which reduce to the same map modulo $p$. Then for every $i\geqslant 0$ the induced maps in cohomology
\[
	\psi_1,\psi_2: H^i(\widetilde{\Omega}^{\kr}_{A^\dagger/W(k)} )\rightarrow H^i(W^\dagger\Omega^{\kr}_{\bar{A}})
\]
are identical.
\end{lem}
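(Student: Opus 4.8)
The plan is to deduce the lemma from Proposition~\ref{Prop:homotopy}, the only subtlety being that the maps $\psi_1^*,\psi_2^*$ in the statement land in the overconvergent de~Rham--Witt complex $W^\dagger\Omega^{\kr}_{\bar{A}}$ rather than in $\widetilde{\Omega}^{\kr}_{W^\dagger(\bar{A})/W(k)}$, so one first has to recall how $\psi_j^*$ is constructed.

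First I would check that the hypotheses of Proposition~\ref{Prop:homotopy} hold with $B=W^\dagger(\bar{A})$: this ring is weakly complete by \cite[Prop.~2.28]{davis_langer_zink2} — though in general not weakly finitely generated, which is precisely why the strengthened form of the proposition is needed — and $A^\dagger$ is the weak completion of a smooth $W(k)$-lift of the smooth finite type $k$-scheme $\bar{A}$. Since $\psi_1$ and $\psi_2$ agree modulo $p$, Proposition~\ref{Prop:homotopy} produces a chain homotopy $\psi_1\simeq\psi_2\colon\widetilde{\Omega}^{\kr}_{A^\dagger/W(k)}\to\widetilde{\Omega}^{\kr}_{W^\dagger(\bar{A})/W(k)}$, say $\psi_2-\psi_1=dL+Ld$ for some degree~$-1$ map $L$.

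Next I would recall that $W^\dagger\Omega^{\kr}_{\bar{A}}$ is a differential graded $W(k)$-algebra with $W^\dagger\Omega^0_{\bar{A}}=W^\dagger(\bar{A})$, so the universal property of the continuous de~Rham complex yields a canonical morphism of complexes $\lambda\colon\widetilde{\Omega}^{\kr}_{W^\dagger(\bar{A})/W(k)}\to W^\dagger\Omega^{\kr}_{\bar{A}}$; and, by the very construction of the comparison map $t_f$ recalled above, the morphism $\psi_j^*$ appearing in the statement equals the composite of $\lambda$ with the morphism $\widetilde{\Omega}^{\kr}_{A^\dagger/W(k)}\to\widetilde{\Omega}^{\kr}_{W^\dagger(\bar{A})/W(k)}$ induced functorially by $\psi_j$ — immediate from the uniqueness in the universal properties involved. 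Since $\lambda$ commutes with the differentials, $\lambda L$ satisfies $\psi_2^*-\psi_1^*=d(\lambda L)+(\lambda L)d$, so $\psi_1^*$ and $\psi_2^*$ are chain homotopic and hence induce the same map on cohomology in every degree $i\geqslant 0$.

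The proof is thus formal once Proposition~\ref{Prop:homotopy} is available; the one point to pin down carefully is the compatibility in the previous paragraph, namely that the two de~Rham complexes in play are linked by the chain map $\lambda$ and that $\psi_j^*$ genuinely factors through it — a short diagram chase with the relevant universal properties and no extra ingredient.
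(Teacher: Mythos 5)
Your proof is correct and follows essentially the same route as the paper: both apply Proposition~\ref{Prop:homotopy} with $B=W^\dagger(\bar{A})$ and then use the uniqueness in the universal property of the continuous de~Rham complex to show that each $\psi_j^*$ factors through the canonical map $\widetilde{\Omega}^{\kr}_{W^\dagger(\bar{A})/W(k)}\rightarrow W^\dagger\Omega^{\kr}_{\bar{A}}$, so that composing the chain homotopy with this map finishes the argument. The compatibility you flag as the one point to pin down is exactly the diagram chase the paper carries out with its diagrams \eqref{eq1} and \eqref{eq2}.
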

\begin{proof}
	 Composing $\psi_i$, $i=1,2$ with the identity, we obtain by the universal property of the continuous de~Rham complex unique maps of differential graded algebras making the  diagram
	\begin{equation}\label{eq1}
		\xymatrix{\widetilde{\Omega}^{\kr}_{A^\dagger/W(k)} \ar@{-->}[r]^{\exists!} & \widetilde{\Omega}^{\kr}_{W^\dagger(\bar{A})/W(k)} \ar@{-->}[r]^{\exists!} & W^\dagger\Omega^{\kr}_{\bar{A}}\\
		A^\dagger\ar[r]^{\psi_i}\ar[u] & W^\dagger(\bar{A}) \ar[u]\ar[r]^{\text{id}} &  W^\dagger(\bar{A})\ar[u]
		}
	\end{equation}
	 commute. Let us call the map induced by $\psi_i$ on continuous de~Rham complexes $\tilde{\psi}_i: \widetilde{\Omega}^{\kr}_{A^\dagger/W(k)}\rightarrow  \widetilde{\Omega}^{\kr}_{W^\dagger(\bar{A})/W(k)} $. On the other hand, again by the universal property of the continuous de~Rham complex there is a unique map
\[
	\psi_i \colon \widetilde{\Omega}^{\kr}_{A^\dagger/W(k)} \rightarrow W^\dagger\Omega^{\kr}_{\bar{A}}
\]
	making
	\begin{equation}\label{eq2}
		\xymatrix{\widetilde{\Omega}^{\kr}_{A^\dagger/W(k)} \ar@{-->}[r]^{\exists!} & W^\dagger\Omega^{\kr}_{\bar{A}}\\
		A^\dagger\ar[r]^{\psi_i}\ar[u] & W^\dagger(\bar{A})\ar[u]
		}
	\end{equation}
	commute. But the upper composition in diagram \eqref{eq1} gives us another map making diagram \eqref{eq2} commute. By uniqueness they have to coincide. We can summarize the above discussion by saying that $\psi_1$ and $\psi_2$ factor as
	\begin{equation}\label{factorisation}
		\xymatrix{\widetilde{\Omega}^{\kr}_{A^\dagger/W(k)} \ar@<-.5ex>[r]_{\tilde{\psi}_2} \ar@<.5ex>[r]^{\tilde{\psi}_1} & \widetilde{\Omega}^{\kr}_{W^\dagger(\bar{A})/W(k)} \ar[r]^-{\text{can}} & W^\dagger\Omega^{\kr}_{\bar{A}}}.
	\end{equation}
	 Finally, keeping in mind that the reduction of $\tilde{\psi}_1$ and $\tilde{\psi}_2$ coincide  Proposition \ref{Prop:homotopy} shows that both maps  are homotopic. By the  factorisation (\ref{factorisation})  this implies that the same is true for $\psi_1$ and $\psi_2$.
\end{proof}

\begin{thm}
Let $\bar{A}$ be a non-singular affine variety over a perfect field of characteristic $p$. For all $i\geqslant 0$ there is a well-defined and functorial isomorphism
	\[
		H^i_{MW}(\bar{A}/W(k)) \stackrel{\simeq}{\rightarrow}  H^i(W^\dagger\Omega^{\kr}_{\bar{A}})
	\]
	between integral Monsky--Washnitzer and overconvergent de~Rham-Witt cohomology.
\end{thm}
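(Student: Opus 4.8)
The plan is to assemble the pieces developed in the previous two sections, working locally on a cover by special affines and then gluing. First I would recall that for a \emph{special} affine $\Spec \bar{A}_I$---one where a suitable Frobenius lift $f$ and coordinates behave well, as in the Davis--Langer--Zink setup---the comparison map $t_f \colon \widetilde{\Omega}^{\kr}_{A_I^\dagger/W(k)} \to W^\dagger\Omega^{\kr}_{\bar{A}_I/k}$ is a quasi-isomorphism; this is the integral statement available on such pieces. The point of Lemma \ref{Lem:Homotopy} is precisely that on such a piece the induced map $t_f^*$ on cohomology does not depend on the choice of Frobenius lift $f$: any two lifts reduce to the same map modulo $p$, hence induce chain-homotopic maps, hence the same map on cohomology. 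Combined with Proposition \ref{Prop:MWFunctoriality}, which makes $H^i_{MW}(\Spec\bar{A}/W(k))$ independent of the lift $A^\dagger$ and functorial in $\bar{A}$, we get on each special affine a canonical isomorphism $H^i_{MW}(\bar{A}_I/W(k)) \xrightarrow{\simeq} H^i(W^\dagger\Omega^{\kr}_{\bar{A}_I})$.

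Next I would address the general non-singular affine $\Spec\bar{A}$, which need not itself be special. Following Davis--Zureick-Brown, cover $\Spec\bar{A}$ by special affines $\Spec\bar{A}_I$ (indexed by a finite set, with multiple intersections $\Spec\bar{A}_{I_0\cdots I_r}$ also special), choose compatible weak formalisations and Frobenius lifts where possible, and form the two \v{C}ech-type double complexes: one built from $\widetilde{\Omega}^{\kr}_{A_{I_0\cdots I_r}^\dagger/W(k)}$ and one from $W^\dagger\Omega^{\kr}_{\bar{A}_{I_0\cdots I_r}}$. The comparison maps $t_f$ on each piece are compatible up to the homotopies just discussed; here one uses that $W^\dagger(\bar{A})$ and its localisations are weakly complete (but not weakly finitely generated), which is exactly why Proposition \ref{Prop:homotopy} was proved without the finite-generation hypothesis. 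One then checks that both augmented \v{C}ech complexes are exact---i.e. that both cohomology theories satisfy the relevant descent / Mayer--Vietoris property for the chosen cover---so that the spectral sequences of the double complexes compute $H^i_{MW}(\bar{A}/W(k))$ and $H^i(W^\dagger\Omega^{\kr}_{\bar{A}})$ respectively, and the map of double complexes induces the desired isomorphism on abutments.

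Finally I would verify well-definedness and functoriality of the resulting isomorphism: independence of the chosen cover follows by passing to a common refinement and using that the induced maps on cohomology are canonical (independent of the auxiliary Frobenius lifts) by Lemma \ref{Lem:Homotopy}; functoriality in $\bar{A}$ follows by combining the functoriality established in Proposition \ref{Prop:MWFunctoriality} with the functoriality of overconvergent de~Rham--Witt cohomology, again reconciling the two via the homotopy lemma on overlaps. The main obstacle I anticipate is the gluing step: one must make sure the local comparison isomorphisms, which are only canonical \emph{on cohomology} and whose underlying chain maps agree only up to coherent homotopy, actually patch to a well-defined map on the global objects. This is handled by working at the level of the \v{C}ech double complexes rather than the naive global complexes, and by invoking Lemma \ref{Lem:Homotopy} to kill the ambiguity in the transition data; the technical care lies in organising these homotopies compatibly across all the multiple overlaps, but no genuinely new input beyond Sections~1 and~2 is required.
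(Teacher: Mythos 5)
Your overall architecture (cover by special affines with special intersections, local quasi-isomorphisms transferred from the known Davis--Langer--Zink comparison via the homotopy lemma, and a \v{C}ech spectral sequence to globalise) is the same as the paper's. But there is a genuine gap at the step you yourself flag as the main obstacle, and your proposed fix does not work. You construct the comparison map by choosing weak formalisations and Frobenius lifts on each piece $\bar{A}_{I}$ separately and then trying to patch the resulting local maps $t_{f_I}$, which agree on overlaps only up to the homotopies of Proposition~\ref{Prop:homotopy}. To get a morphism of \v{C}ech double complexes --- which is what you need in order to get a morphism of spectral sequences and hence a map on the abutments --- you need the local comparison maps to commute \emph{strictly} with the \v{C}ech restriction maps, or else a full system of coherent higher homotopies (homotopies between homotopies on triple overlaps, and so on). Lemma~\ref{Lem:Homotopy} produces a single chain homotopy between two given maps; it says nothing about compatibility of these homotopies across multiple overlaps, so "invoking Lemma~\ref{Lem:Homotopy} to kill the ambiguity in the transition data" does not produce the required map of double complexes. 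Maps that merely agree on cohomology of each column do not induce a map of spectral sequences.

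The paper avoids this problem entirely, and this is the one idea your write-up is missing: the comparison map is defined \emph{globally first}. One fixes a single weak formalisation $A^\dagger$ of $\bar{A}$ and a single Frobenius lift $f\colon A^\dagger\to A^\dagger$, obtains $t_f\colon\widetilde{\Omega}^{\kr}_{A^\dagger/W(k)}\to W^\dagger\Omega^{\kr}_{\bar{A}/k}$, and sheafifies it to a morphism of complexes of sheaves $t_f\colon\mathcal{F}^{\kr}_{A^\dagger}\to W^\dagger\Omega^{\kr}_{\Spec\bar{A}/k}$ on $\Spec\bar{A}$ (this is \cite[Prop.~3.3]{davis_zureickbrown}). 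Restriction to the opens of the cover is then automatically strictly compatible, so the map of \v{C}ech double complexes exists for free. The role of Lemma~\ref{Lem:Homotopy} is then the opposite of what you assign to it: on each special intersection $U_I$ one has the a priori \emph{different} quasi-isomorphism $\sigma$ of \cite[Thm.~3.19]{davis_langer_zink}, and since both $\sigma$ and the restriction of the global $t_f$ reduce to the identity modulo $p$, the lemma shows they are homotopic, whence $t_f$ is a local quasi-isomorphism. With that, the $E_2$-comparison and \cite[Lem.~2.11]{davis_zureickbrown} finish the argument, and no gluing of locally defined comparison maps is ever needed. Your first paragraph also slightly misstates the local input: the integral quasi-isomorphism on a special affine is available for the specific map $\sigma$ constructed in \cite[(3.5)]{davis_langer_zink}, not directly for $t_f$; passing from $\sigma$ to $t_f$ is exactly what the homotopy lemma is for.
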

\begin{proof}
	From here on, a similar proof as in \cite[Pf. of Thm. 1.1 (2)]{davis_zureickbrown} using a \v{C}ech spectral sequence argument applies. We recall it for completeness. Thus let $A^\dagger$ be a weak formalisation of $\bar{A}$ and $f:A^\dagger\rightarrow A^\dagger$ a lift of Frobenius.
	
Let $\mathcal{F}^{\kr}_{A^\dagger}$ be the sheaf of complexes associated to $\widetilde{\Omega}^{\kr}_{A^\dagger/W(k)}$ on $\Spec\bar{A}$. By \cite[Prop.~3.3]{davis_zureickbrown} the map $t_f$ from above induces  a morphism of complexes
	$$
	t_f: \mathcal{F}^{\kr}_{A^\dagger}\rightarrow W^\dagger\Omega_{\Spec{\bar{A}}/k}.
	$$
It is now possible to choose a cover $\mathcal{U}=\{U_i=\Spec\bar{A}_i\}$ of $\Spec{\bar{A}}$ by finitely many open special affines such that all finite intersections are of this form as well \cite[Prop.~3.5]{davis_zureickbrown}. ``Special'' in this context means the spectrum of  an algebra which is finite \'etale and monogenic  over the localisation of a polynomial algebra. For an arbitrary finite intersection of these opens $U_I=\Spec \bar{A}_I$ and a weak formalisation $A_I^\dagger$ we consider the induced map on cohomology
	$$
	t_f: \mathbb{H}^i(U_I,\mathcal{F}^{\kr}_{A^\dagger})=H^i(\widetilde{\Omega}^{\kr}_{A_I^\dagger/W(k)})\rightarrow H^i(W^\dagger\Omega_{\bar{A}_I/k}^{\kr})=\mathbb{H}^i(U_I,W^\dagger\Omega^{\kr}_{\Spec{\bar{A}/k}})$$
	where the first and last equality are due to the fact that the $\mathcal{F}^j_{A^\dagger}$ and $W^\dagger\Omega^j_{\Spec\bar{A}/k}$ have trivial sheaf cohomology for cohomological degree $i>0$. 
	
	By \cite[Thm.~3.19]{davis_langer_zink} there is a comparison  morphism $\sigma: \widetilde{\Omega}^{\kr}_{A_I^\dagger/W(k)}\rightarrow W^\dagger\Omega_{\bar{A}_I/k}^{\kr}$ for the special affine $\bar{A}_I$ which induces an isomorphism
	$$
	H^i(\widetilde{\Omega}^{\kr}_{A_I^\dagger/W(k)}) \xrightarrow{\sim} H^i(W^\dagger\Omega_{\bar{A}_I/k}^{\kr}).
	$$
Moreover, from the construction in \cite[(3.5)]{davis_langer_zink} it is immediately clear that $\sigma$ reduces to the identity modulo $p$, which as observed at the beginning of this section is also the case for $t_f$.  Applying Lemma \ref{Lem:Homotopy} we see that $t_f$ and $\sigma$ are homotopic. In particular, $t_f$ is a quasi-isomorphism on $\bar{A}_I$. 

For the induced morphism of \v{C}ech spectral sequences
	$$
	\xymatrix{ \check{H}^p(\mathcal{U}, \mathbb{H}^q(-,\mathcal{F}^{\kr}_{A^\dagger})) \ar[d]^{t_f}_{\cong}\ar@2{->}[r] & \mathbb{H}^{p+q}(\Spec \bar{A}, \mathcal{F}^{\kr}_{A^\dagger}) \ar[d]^{t_f}\\
	\check{H}^p(\mathcal{U}, \mathbb{H}^q(-,W^\dagger\Omega^{\kr}_{\bar{A}/k})) \ar@2{->}[r] &\mathbb{H}^{p+q}(\Spec \bar{A}, W^\dagger\Omega^{\kr}_{\bar{A}/k})}
	$$
this means  by \cite[Lem.~2.11]{davis_zureickbrown} that the fact that the morphisms on the left-hand side are isomorphisms show that the morphism on the right-hand side is one as well.
\end{proof}

\begin{rem}
It is worth to point out that the above comparison result indicates, as did already the result of Davis and Zureick-Brown for low cohomological degrees, that the cohomology groups of the  integral overconvergent de Rham--Witt complex are in general not finitely generated over $W(k)$. Monsky and Washnitzer mention in  \cite[Rem.~(3), p.~205]{formal_cohomology} as a counter example the affine line $\mathbb{A}_k^1=\Spec(k[T])$, for which the first cohomology group $H^1_{\text{MW}}(\mathbb{A}_k^1/W(k))$ is a huge torsion module. One can easily see this by considering the differentials $T^{p^n-1}dT$ in $\widetilde{\Omega}^{\kr}_{W(k)\langle T\rangle^\dagger/W(k)}$, which are closed but not exact.
\end{rem}

\end{document}